\def\nd{\noindent}
\newtheorem{thm}{Theorem}[section]
\newtheorem{lem}[thm]{Lemma}
\newtheorem{prop}[thm]{Proposition}
\newtheorem{cor}[thm]{Corollary}
\newtheorem{rem}[thm]{Remark}
\title{\bfseries A note on the Hurwitz problem\\ and cone spherical metrics}
\author{Jijian Song$^1$, Bin Xu$^2$ and Yu Ye$^3$}
\begin{document}

\maketitle

\nd{\small  $^{1}$Center for Applied Mathematics, Tianjin University. No. 135 Yaguan Road, Tianjin 300350 China. \\
$^{2,3}$CAS Wu Wen-Tsun Key Laboratory of Mathematics and School of Mathematical Sciences, University of Science and Technology of China, No. 96 Jinzhai Road, Hefei, Anhui Province 230026 China.}

\nd {\small $^{1}$jijian.song@kcl.ac.uk \quad\quad  $^2$bxu@ustc.edu.cn \quad\quad $^{3}$yeyu@ustc.edu.cn }
\par\vskip0.5cm

\nd {\small {\bf Abstract:}  Motivated by cone spherical metrics on compact Riemann surfaces of positive genus, we address a specific case of the Hurwitz problem. Specifically, for given positive integers $d$, $g$ and $\ell$, along with a collection $\Lambda$ consisting of $(\ell+2)$ partitions of the positive integer $d$:
\[
(a_1,\cdots, a_p),  (b_1,\cdots, b_q),  (m_1+1,1,\cdots, 1),  \cdots,  (m_{\ell}+1,1,\cdots, 1),
\]
where $(m_1,\cdots, m_{\ell})$ is a partition of $p+q-2+2g$, we establish the existence of a branched cover from a compact Riemann surface of genus $g$ to the Riemann sphere ${\Bbb P}^1$ with branch data $\Lambda$. Notably, this complements the genus-zero case, as initially discovered by the first two authors in {\it Algebra Colloq.} {\bf 27} (2020), no. 2, 231-246. Their work was prompted by analogous metrics on ${\Bbb P}^1$, leading them to conjecture the validity of the aforementioned statement in that context.}

\nd{\bf Keywords.} branched cover, Hurwitz problem, cone spherical metric, one-form

\nd {\bf 2020 Mathematics Subject Classification.} Primary 57M12; secondary 20B35

\section{Introduction}

A branched cover $f$ of degree $d>1$ mapping a compact Riemann surface to another induces a collection $\Lambda$ comprising finitely many partitions of $d$, each with a length less than $d$, referred to as the {\it branch data} of $f$. Notably, $\Lambda$ adheres to the well-known Riemann-Hurwitz formula. Simultaneously, there exists a set of partitions of $d$, termed {\it exceptions}, that satisfy the Riemann-Hurwitz formula but do not qualify as the branch data for any branched cover \cite{Zh06}.

The classical Hurwitz problem, spanning over 130 years of history, seeks to {\it enumerate branched covers with prescribed branch data under some equivalence relation}. A simplified variant involves determining {\it whether a branched cover exists with a given branch data}, a question resolved in \cite[Proposition 3.3]{EKS84}, asserting the absence of exceptions when the target surface has positive genus. Our focus in this manuscript centers on branched covers to the Riemann sphere ${\Bbb P}^1$. Despite substantial progress by mathematicians in understanding both branch data and exceptions, both the original problem and its simplified version remain open. Comprehensive works, both classical \cite{Bo82, EKS84, Eze78, Fr76, Ger87, Huse62, KZ95, Sin70, Thom65} and recent \cite{Ba01, LZ04, MSS04, OP06, Pa09, PaPe09, PaPe12, PerPe06, PerPe08, Zh06, Zhu19, SX20}, have contributed significantly to this complex problem. The new results presented in this manuscript pertain exclusively to the simplified version of the Hurwitz problem, employing notations from \cite{CWWX15, SX20}.

{\it Cone spherical metrics} are conformal metrics on compact Riemann surfaces with constant curvature $+1$ and finitely many cone singularities. The problem of establishing the existence of such metrics with prescribed cone singularities on compact Riemann surfaces has remained open since its formal proposition by Troyanov \cite{Troyanov91}, with the sub-critical case being resolved in the 1980s.

A metric is termed {\it reducible} if its developing map has monodromy in ${\rm U}(1)$; otherwise, it is categorized as {\it irreducible}. Q. Chen, W. Wang, Y. Wu and the second author \cite{CWWX15} provided a characterization of reducible (cone spherical) metrics in terms of meromorphic one-forms with simple poles and periods in $\sqrt{-1}{\Bbb R}$, denoted as {\it unitary one-forms} on compact Riemann surfaces. Notably, the cone angles of a reducible metric are intricately determined by the residues of poles and the multiplicities of zeros of a unitary one-form (\cite[Theorem 1.5]{CWWX15}).

A unitary one-form on ${\Bbb P}^1$ has residues in ${\Bbb R}\setminus \{ 0 \}$ and adheres to both the residue theorem and the degree condition---wherein the sum of multiplicities of its zeros equals the number of its simple poles minus 2. In pursuit of establishing the angle constraint for reducible metrics with cone angles lying in $2\pi{\Bbb Q}_{>0}$ on ${\Bbb P}^1$, the second author, building upon the draft of \cite{CWWX15}, discovered in 2014 that it suffices to prove the following fact: \\

\nd {\bf Fact 1.} {\it Consider two partitions $(a_1,\cdots, a_p)$ and $(b_1,\cdots, b_q)$ of a positive integer $d>1$, along with a partition $(m_1,\cdots, m_\ell)$ of $p+q-2>0$. The existence of a unitary one-form $\omega$ on ${\Bbb P}^1$ having $p+q$ residues of $a_1,\cdots, a_p, -b_1,\cdots,-b_q$, and $\ell$ zeros with multiplicities $m_1,\cdots, m_\ell$ is guaranteed if and only if}
\begin{equation}
\label{equ:degwt}
\max(m_1,\cdots, m_\ell)<\frac{d}{{\rm GCD}(a_1,\cdots,a_p, b_1,\cdots, b_q)}.
\end{equation}

\nd Upon obtaining such a one-form $\omega$, the second author noted in 2014 that the solution to the ordinary differential equation $$\frac{{\rm d}f}{f}=\omega$$ is a unique rational function $f$ on ${\Bbb P}^1$, up to a multiple. This observation substantiates the necessary condition \eqref{equ:degwt} from Fact 1. The second author further reduced the sufficient part of Fact 1 to a specific case of the Hurwitz problem, which was subsequently resolved affirmatively in 2015 in collaboration with the first author. Refer to \cite[Theorem 1.1]{SX20}, specifically Case 1 of Theorem \ref{thm:rat}. By taking the logarithmic differential $\frac{{\rm d}f}{f}$ with respect to the branched cover $f \colon {\Bbb P}^1\to {\Bbb P}^1$ in Case 1 of Theorem \ref{thm:rat}, the desired one-form $\omega$ is derived.

Eremenko \cite{Eremenko:2017} utilized this specific case of the Hurwitz problem, solved by the first and second authors, and incorporated the theory of o-minimal structures to establish the angle constraint for reducible metrics on ${\Bbb P}^1$. Preceding Eremenko, Mondello-Panov \cite[Theorem C]{MP1505} utilized parabolic rank two stable bundles to provide the angle constraint for irreducible metrics on ${\Bbb P}^1$. The amalgamation of these results yields the angle constraint for cone spherical metrics on ${\Bbb P}^1$. X. Zhu \cite{Zhu19} subsequently utilized this angle constraint to discover infinitely many new exceptions for branched covers from ${\Bbb P}^1$ to itself.

Mondello-Panov \cite[Theorem A]{MP1807} employed the cutting and gluing technique to demonstrate that the Gauss-Bonnet formula constitutes the sole angle constraint for cone spherical metrics on compact Riemann surfaces of {\it positive} genus. More recently, Q. Gendron and G. Tahar \cite[Theorem 5.3]{GT21} applied totally real Jenkins-Strebel differentials to establish the angle constraint for reducible metrics on compact Riemann surfaces of {\it positive} genus. This was reduced to an existence theorem of unitary one-forms with both prescribed residues and zero multiplicities. Almost simultaneously, Z. Wei, Y. Wu and the second author \cite[Theorem 1.9]{WWX22} employed the football-decomposition technique for reducible metrics to prove the same result as the theorem of Gendron-Tahar. A special case of this result is stated as follows: \\

\nd {\bf Fact 2.} {\it Let $p,q,g$ be three positive integers, $(a_1,\cdots, a_p)$ and $(b_1,\cdots, b_q)$ be two partitions of  $d$, and $(m_1,\cdots, m_\ell)$ a partition of $p+q-2+2g$. Then there exists a unitary one-form $\omega$ on some compact Riemann surface of genus $g$ such that it has the $p+q$ residues of $a_1,\cdots, a_p, -b_1,\cdots,-b_q$ and $\ell$ zeros with multiplicities $m_1,\cdots, m_\ell$, respectively. }\\

\nd In essence, the positive genus introduced in Fact 2 serves to eliminate the algebraic constraint \eqref{equ:degwt} in Fact 1, allowing the residue theorem, coupled with the degree condition, to ensure the existence of $\omega$. However, it is crucial to note that Fact 2 is non-trivial, given that all periods of $\omega$ reside in $\sqrt{-1}{\Bbb R}$. Inspired by Fact 2, we establish Case 2 in the subsequent theorem, the proof of which stands independent of this particular observation.
\begin{thm}
\label{thm:rat}
Let $d$ and $\ell$ be positive integers. Suppose that a collection $\Lambda$ comprises $\ell + 2$ partitions of $d$ as follows:
\[ (a_{1}, a_{2}, \ldots, a_{p}), \; (b_{1}, b_{2}, \ldots, b_{q}),\; (m_{1}+1, 1, \ldots, 1),\; \ldots, \; (m_{\ell}+1, 1, \ldots, 1).\]
We define the {\rm total branch number} $v(\Lambda)$ of $\Lambda$ to be $m_1+\cdots+m_\ell+(d-p)+(d-q)$.
Then $\Lambda$ represents the branch data of a branched cover from a compact Riemann surface $X$ to ${\Bbb P}^1$ if and only if it satisfies one of the following two conditions{\rm :}
\begin{enumerate}
\item $v(\Lambda) = 2d - 2$ and $\displaystyle{\max\{m_{1}, \ldots, m_{\ell}\} < \frac{d}{\operatorname{GCD}(a_{1}, \ldots, a_{p}, b_{1}, \ldots, b_{q})}{\rm ;}}$
\item The total branching order $v(\Lambda) \geq 2d$ is even.
\end{enumerate}
Moreover, the genus of $X$ equals $\displaystyle{\frac{v(\Lambda)-2d+2}{2}}$.
\end{thm}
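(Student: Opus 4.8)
\emph{Plan of proof.} Necessity, together with the closing genus formula, follows at once from Riemann--Hurwitz. If $f\colon X\to{\Bbb P}^1$ realizes $\Lambda$, the local branching contributions are $d-p$ over the $(a_i)$-point, $d-q$ over the $(b_j)$-point, and $m_k$ over the point carrying $(m_k+1,1,\dots,1)$, so Riemann--Hurwitz reads $2g(X)-2=-2d+v(\Lambda)$; hence $g(X)=\bigl(v(\Lambda)+2-2d\bigr)/2$, and in particular $v(\Lambda)$ is even and $v(\Lambda)\ge 2d-2$. If $v(\Lambda)=2d-2$ then $g(X)=0$ and $f$ is a branched cover ${\Bbb P}^1\to{\Bbb P}^1$ with branch data $\Lambda$, so the inequality in Condition~1 holds by the necessity half of \cite[Theorem 1.1]{SX20}; otherwise $v(\Lambda)\ge 2d$ is even, which is Condition~2. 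Conversely, under Condition~1 the cover is produced by \cite[Theorem 1.1]{SX20} --- this is Case~1 --- so the whole of the remaining argument is the sufficiency of Condition~2.

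Under Condition~2 the plan is to pass to monodromy: it is enough to produce $\sigma,\tau_1,\dots,\tau_\ell,\rho\in S_d$ such that $\sigma$ has cycle type $(a_1,\dots,a_p)$, $\rho$ has cycle type $(b_1,\dots,b_q)$, each $\tau_k$ is a single $(m_k+1)$-cycle, $\sigma\,\tau_1\cdots\tau_\ell\,\rho=\mathrm{id}$, and $\langle\sigma,\tau_1,\dots,\tau_\ell,\rho\rangle$ is transitive on $\{1,\dots,d\}$; the associated connected branched cover then has branch data $\Lambda$, and genus $\bigl(v(\Lambda)+2-2d\bigr)/2\ge1$ by Riemann--Hurwitz. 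We may assume each $m_k\ge1$, so $\sum_k m_k=v(\Lambda)-2d+p+q\ge p+q$ and $\sum_k m_k\equiv p+q\pmod 2$. I would carry out the construction by induction on $\ell$, using the elementary factorization
\[
(x_1\,x_2\,\cdots\,x_r)=(x_1\,x_2\,\cdots\,x_j)\,(x_j\,x_{j+1}\,\cdots\,x_r)\qquad(1\le j\le r)
\]
of an $r$-cycle as a product of a $j$-cycle and an $(r-j+1)$-cycle overlapping in the single point $x_j$. For the inductive step: if $\ell\ge2$ and some pair $i\ne k$ has $m_i+m_k\le d-1$, we merge them, i.e.\ replace $\{m_i,m_k\}$ by the single value $m_i+m_k$; the resulting collection still satisfies Condition~2 (the new part $m_i+m_k+1$ is $\le d$ and $v(\Lambda)$ is unchanged), so by induction it is realized by a tuple in which one $(m_i+m_k+1)$-cycle $\gamma$ replaces $\tau_i$ and $\tau_k$, and factoring $\gamma=\tau_i\tau_k$ by the identity above restores a transitive monodromy tuple for $\Lambda$. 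This leaves two base configurations: $\ell=1$, and the \emph{large-cycle} regime, in which $\ell\ge2$ but $m_i+m_k\ge d$ for every $i\ne k$ (which forces every $m_i\ge d/2$).

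The base case $\ell=1$ --- the heart of the matter --- asks for a permutation $\alpha$ of cycle type $(a_i)$, a permutation $\beta$ of cycle type $(b_j)$, and an $(m+1)$-cycle $\gamma$ with $\alpha\gamma\beta=\mathrm{id}$ generating a transitive group, where $p+q\le m\le d-1$ and $m\equiv p+q\pmod 2$; equivalently, $\beta\alpha$ should be a single $(m+1)$-cycle together with $d-m-1$ fixed points, with $\langle\alpha,\beta\rangle$ transitive. The plan here is to choose the support of $\gamma$ to be an $(m+1)$-element set meeting every cycle of $\alpha$ and of $\beta$, and to build $\alpha,\beta$ so that multiplying $\alpha$ by $\gamma$ converts the cycle type $(a_i)$ into $(b_j)$: the inequality $m\ge p+q$ gives exactly the slack needed to interleave the cycles of $\alpha$ with the support of $\gamma$ (informally via $p-1$ ``merges'', $q-1$ ``splits'', and the leftover moves in cancelling pairs) so as to reach the target type, while the remaining $d-m-1$ points stay fixed by $\gamma$ but moved by $\alpha$ and $\beta$, keeping the cover connected. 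When $d=m+1$ this specializes to the classical statement that a $d$-cycle is a product of a type-$(a_i)$ permutation and a type-$(b_j)$ permutation whenever $p+q\le d+1$ with matching parity. The large-cycle regime would be handled by a direct argument of the same flavour --- one may, for instance, re-split a pair so that one part equals $d-1$, making some $\tau_k$ a $d$-cycle and transitivity automatic, and then invoke that a permutation of the correct sign is a product of any prescribed number $\ge2$ of $d$-cycles.

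The step I expect to be the genuine obstacle is precisely this base case $\ell=1$: naive choices of permutations of types $(a_i)$ and $(b_j)$ almost never multiply to a single $(m+1)$-cycle, so one really needs a clean ``product of two permutations'' lemma that controls the cycle type of $\alpha^{-1}\beta^{-1}$ exactly --- one $(m+1)$-cycle and $d-m-1$ fixed points --- while simultaneously guaranteeing transitivity. Establishing such a lemma, and doing the bookkeeping that settles the large-cycle configurations, is where the real work of Case~2 lies; everything else is assembly via Riemann--Hurwitz, the merging reduction, and \cite[Theorem 1.1]{SX20}.
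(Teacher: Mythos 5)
Your necessity argument, the genus formula, and the reduction of sufficiency to finding a transitive tuple of permutations with prescribed cycle types and product the identity all match the paper (which invokes the Riemann existence theorem to pass to Proposition~1.4). But the sufficiency of Condition~2 is where all the content lies, and your proposal leaves its two essential pieces unproven. First, the base case $\ell=1$ --- writing a single $(m+1)$-cycle times $d-m-1$ fixed points as $\beta\alpha$ with $\alpha,\beta$ of prescribed types and $\langle\alpha,\beta\rangle$ transitive --- is exactly Boccara's theorem, which you correctly identify as ``the heart of the matter'' but then only sketch (``$p-1$ merges, $q-1$ splits, and the leftover moves in cancelling pairs''). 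The paper devotes Proposition~2.1 to this, via an induction on $d$ with four subcases (splitting off $b_q$ when $m_1-1<D-b_q$, subtracting $2$ from $a_p$ and $b_q$ otherwise, with separate treatment of the all-equal case and appeals to Edmonds--Kulkarni--Stong for $b_q=2$). Naming the statement you need is not the same as establishing it, and no naive interleaving produces the exact cycle type ``one $(m+1)$-cycle plus fixed points.''

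Second, your reduction from general $\ell$ to $\ell=1$ by merging $m_i,m_k$ into $m_i+m_k$ only works when $m_i+m_k\le d-1$, and the complementary ``large-cycle regime'' is not a fringe case: since $\sum_k m_k=p+q-2+2g$ can far exceed $d-1$ (e.g.\ $d=4$, $a=b=(4)$, $g=3$, $m_1=m_2=3$), high genus systematically forces it. Your proposed fix there --- ``re-split a pair so that one part equals $d-1$'' --- does not respect the prescribed lengths $m_k+1$, and the parenthetical claim that every $m_i\ge d/2$ in this regime is false (only all but the smallest need be). The paper avoids this obstruction entirely by a different induction: it first handles $\sum m_k=p+q$ by merging parts of the partition $(a_1,\dots,a_p)$ (Lemma~2.4), and then increases $m_1+\cdots+m_\ell$ in steps of $2$ by multiplying two of the cycles $\widetilde\sigma_1,\widetilde\sigma_2$ by a common transposition $(x\,y)$ chosen so that both cycle lengths grow by one (Lemma~2.5); this never needs $m_i+m_k\le d-1$. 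So the proposal is a plausible plan whose two load-bearing steps --- Boccara's theorem and the high-genus configurations --- are precisely the ones left open.
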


A branched cover from a compact Riemann surface $X$ to ${\Bbb P}^1$ is termed a {\it Belyi function} on $X$ if it has at most three branched points. Employing a similar argument as in the proof of Theorem 1.3 in \cite{SX20}, we can deduce the following corollary from Theorem \ref{thm:rat}.

\begin{cor} Utilizing the notations introduced in Theorem \ref{thm:rat}, there exists a Belyi function of degree $\ell d$ on $X$ with the branch data
\[\widetilde\Lambda=\left\{(\ell a_1,\cdots, \ell a_p),(\ell b_1,\cdots, \ell b_q), (m_1+1,\cdots, m_\ell+1,1,\cdots, 1)\right\}.\]
\end{cor}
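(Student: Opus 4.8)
The plan is to produce the Belyi function as a composition $h = R\circ f$, where $f\colon X\to{\Bbb P}^1$ is a branched cover with branch data $\Lambda$ furnished by Theorem~\ref{thm:rat}, and $R\colon {\Bbb P}^1\to{\Bbb P}^1$ is the power map $R(z)=z^{\ell}$. The point is that $R$ is itself a degree-$\ell$ Belyi function on ${\Bbb P}^1$: it is totally ramified over $0$ and over $\infty$, it is branched only over $\{0,\infty\}$, and $R^{-1}(1)$ consists of the $\ell$ distinct $\ell$-th roots of unity $\zeta_1,\ldots,\zeta_{\ell}$ with no ramification at any of them. Hence, if we arrange $f$ so that the partitions $(a_1,\ldots,a_p)$ and $(b_1,\ldots,b_q)$ of $\Lambda$ lie over $0$ and $\infty$ respectively while the $\ell$ partitions $(m_i+1,1,\ldots,1)$ lie over $\zeta_1,\ldots,\zeta_\ell$, then the branch locus of $h$ is contained in $\{0,\infty\}\cup R\bigl(\{0,\infty,\zeta_1,\ldots,\zeta_\ell\}\bigr)=\{0,1,\infty\}$, so $h$ is a Belyi function.

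First I would justify the existence of such an $f$ with its branch points fixed at these particular $\ell+2$ points. Theorem~\ref{thm:rat} provides one branched cover with branch data $\Lambda$; equivalently, by the Riemann existence theorem there is a transitive $(\ell+2)$-tuple of permutations in $S_d$ whose cycle types are the members of $\Lambda$ and whose product is the identity. The same tuple is the monodromy of a connected branched cover branched over \emph{any} prescribed set of $\ell+2$ distinct points of ${\Bbb P}^1$; taking those points to be $0$, $\infty$ and $\zeta_1,\ldots,\zeta_\ell$ yields the desired $f$ on some compact Riemann surface $X$ of the genus named in Theorem~\ref{thm:rat}.

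Next I would read off the branch data of $h=R\circ f$ from the multiplicativity of ramification indices under composition. Over $0$ the fibre of $h$ is $f^{-1}(0)$, whose $f$-multiplicities $a_1,\ldots,a_p$ each get multiplied by the ramification index $\ell$ of $R$ at $0$, giving the partition $(\ell a_1,\ldots,\ell a_p)$; symmetrically the fibre over $\infty$ gives $(\ell b_1,\ldots,\ell b_q)$. Over $1$ the fibre of $h$ is $f^{-1}(\zeta_1)\cup\cdots\cup f^{-1}(\zeta_\ell)$, and since $R$ is unramified at each $\zeta_i$ the local degrees there coincide with those of $f$: the fibre over $\zeta_i$ contributes one point of multiplicity $m_i+1$ together with $d-m_i-1$ simple points, so assembling over $i=1,\ldots,\ell$ gives the partition $(m_1+1,\ldots,m_\ell+1,1,\ldots,1)$ of $\ell d$. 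Therefore $h$ has branch data $\widetilde\Lambda$ and degree $\deg R\cdot\deg f=\ell d$, as claimed.

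I do not expect a genuine obstacle: this is exactly the genus-zero argument of \cite[Theorem 1.3]{SX20}, whose only inputs are Theorem~\ref{thm:rat} and the freedom to position the branch points of $f$. The single step deserving a line of care is the bookkeeping over $1$: one should check that precisely $\ell(d-1)-\sum_i m_i\ge 0$ simple points remain in that fibre (which holds because $m_i+1\le d$), and that applying the Riemann--Hurwitz formula to $h$ recovers the genus $\tfrac{v(\Lambda)+2-2d}{2}$ of $X$, a short computation using $v(\Lambda)=m_1+\cdots+m_\ell+(d-p)+(d-q)$.
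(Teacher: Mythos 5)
Your proposal is correct and follows essentially the same route the paper invokes, namely the argument of Theorem 1.3 in \cite{SX20}: take a cover realizing $\Lambda$ with its branch points placed at $0$, $\infty$ and the $\ell$-th roots of unity, and post-compose with the power map $z\mapsto z^{\ell}$, using multiplicativity of local degrees to read off $\widetilde\Lambda$.
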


\begin{rem}
{\rm The case of $\ell=1$ in Theorem \ref{thm:rat} has been established by Boccara \cite{Bo82}, providing additional motivation alongside Facts 1-2. While we have previously proven the first instance with $v(\Lambda) = 2d - 2$ in \cite{SX20}, specifically when $X$ is ${\Bbb P}^1$. The necessary part of the second case follows directly from Riemann-Hurwitz. Consequently, in the remainder of this note, we focus on demonstrating its sufficient part.

An evident distinction emerges between Case 1 and Case 2 in Theorem \ref{thm:rat}. This disparity can be likened to irregularity phenomena observed in the enumeration geometry of Riemann surfaces: unstable Riemann surfaces exhibit more irregularity than stable surfaces \cite[p.53]{Eyn16}.

By taking the logarithmic differential of the branched cover, Theorem \ref{thm:rat} implies Fact 1, with the genus of $X$ being zero. However, Fact 2 does not directly result from the theorem. In fact, Wei, Wu and the second author established Fact 2 in \cite[Theorem 1.9]{WWX22} by constructing the very reducible metric corresponding to the unitary one-form involved. Conversely, following Hurwitz's approach, we apply the Riemann existence theorem \cite[Theorem 2, p.49]{Don11} to reduce Theorem \ref{thm:rat} to the following proposition.}
\end{rem}


\begin{prop}
\label{prop:subgroup}
Let $d$, $\ell$ and $g$ be positive integers. If
\[ \Lambda = \big\{(a_{1}, a_{2}, \ldots, a_{p}), \; (b_{1}, b_{2}, \ldots, b_{q}),\; (m_{1}+1, 1, \ldots, 1),\; \ldots, \; (m_{\ell}+1, 1, \ldots, 1)\big\} \]
represents a collection of $\ell + 2$ partitions of $d$ with $m_{1} + \cdots +m_{\ell} = p+q-2+2g$, then there exist $\ell +2$ permutations $\tau_{1}, \tau_{2}, \sigma_{1}, \ldots, \sigma_{\ell} \in S_{d}$ satisfying the following three conditions{\rm :}
\begin{enumerate}
\item $\tau_{1} \tau_{2} \sigma_{1} \cdots \sigma_{\ell} = id${\rm ;}
\item $\tau_{1}$ has type $a_{1}^{1} a_{2}^{1} \cdots a_{p}^{1}$, $\tau_{2}$ has type $b_{1}^{1}b_{2}^{1} \cdots b_{q}^{1}$, and $\sigma_{i}$ has type $(m_{i}+1)^{1} 1^{d-m_{i}-1}$ for $i = 1,2, \ldots, \ell${\rm ;}
\item the subgroup generated by $\tau_{1}, \tau_{2}, \sigma_{1}, \ldots, \sigma_{\ell}$ acts transitively on the set $\{1,2,\cdots, d\}$.
\end{enumerate}
\end{prop}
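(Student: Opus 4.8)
The plan is to build the $\ell+2$ permutations by an induction that simplifies the datum $\Lambda$ (with $g\ge 1$) step by step, using elementary surgeries on the near-cycles $\sigma_i$ only and leaving $\tau_1,\tau_2$ alone. The basic tool is that a single $(m{+}1)$-cycle factors inside $S_d$ in several controlled ways: it is $(x_0\cdots x_r)(x_r\cdots x_m)$, a product of two shorter cycles overlapping in one point, and it is also the product of a transposition with a cycle of length $m$ (which shrinks or grows a near-cycle by one; up to the composition convention one has, say, $(x_0\,x_1)(x_1\,x_2\cdots x_m)=(x_0\,x_1\,x_2\cdots x_m)$ and $(x_0\,x_1)(x_0\,x_1\cdots x_m)=(x_1\,x_2\cdots x_m)$). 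Using these one sets up two reductions of $\Lambda$: (i) whenever two of the defects satisfy $m_i+m_j\le d-1$, merge $\sigma_i$ and $\sigma_j$ into one near-cycle of defect $m_i+m_j$, lowering $\ell$ by one and keeping $g$; and (ii) whenever $\ell\ge 2$, lower two of the defects by one each — a transposition is split off one near-cycle and absorbed into another, the two copies cancelling so that $\tau_1\tau_2\sigma_1\cdots\sigma_\ell=\mathrm{id}$ is preserved — which lowers $g$ by one. The cyclic order of $\sigma_1,\dots,\sigma_\ell$ may be permuted freely, as this merely relabels the branch points, so ``adjacent'' is no restriction; and since each surgery either breaks a factor into two factors whose product it is, or replaces a near-cycle by one with at least the same support, the orbits of the generated subgroup can only coarsen. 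Hence a transitive factorization for the simplified datum lifts to a transitive factorization for $\Lambda$.

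Applying (i) and (ii) repeatedly — using (ii) only while $g\ge 2$, so as never to fall into genus $0$, where the arithmetic restriction \eqref{equ:degwt} would reappear — reduces the problem to the \emph{irreducible} data: either $\ell=1$, or $g=1$ with $\ell\ge 2$ and no two defects summing to at most $d-1$. When $\ell=1$ this is precisely Boccara's theorem \cite{Bo82} on writing an $n$-cycle times fixed points as a product of two permutations of prescribed cycle types, which is available to us. In the second family every two of $m_1,\dots,m_\ell$ sum to at least $d$, and since $m_1+\cdots+m_\ell=p+q-2+2g=p+q\le 2d$, disjoint pairs already force $\ell\le 4$. Thus only a short explicit list of shapes remains — branch data with four, five, or six branch points — to be disposed of by a direct construction.

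The real difficulty is transitivity in these remaining base cases, and especially the regime $p+q\ge d+2$: there $\langle\tau_1,\tau_2\rangle$ can never act transitively, so the near-cycles must themselves knit together its orbits while simultaneously realizing $(\tau_1\tau_2)^{-1}$ with the prescribed lengths. My plan for these is to choose $\tau_1,\tau_2$ of types $(a_i),(b_j)$ for which $\tau_1\tau_2$ has small defect and for which that product together with the orbit blocks of $\langle\tau_1,\tau_2\rangle$ are laid out on intervals of consecutive integers, and then to place the (few, large) near-cycles so that their product is $(\tau_1\tau_2)^{-1}$ while each of them is routed through several $\langle\tau_1,\tau_2\rangle$-orbits so as to connect all of them; the surplus total branching $2g$ of the original datum — exactly what is gained when \eqref{equ:degwt} fails and $g$ becomes positive — is the slack that makes room for these connecting passages. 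Verifying, in each of the finitely many shapes, that the permutations so produced have the right cycle types and generate a transitive group is then a bounded, if somewhat tedious, check.
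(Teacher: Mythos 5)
Your outer induction is sound and in fact coincides with half of the paper's argument: your surgery (ii) --- growing two adjacent near-cycles $\sigma_i,\sigma_j$ by a cancelling pair of transpositions $(x\,y)$, with $x,y$ chosen according to how the supports $A_i,A_j$ overlap --- is exactly the paper's Lemma \ref{lem:lis2} and Step 2 of its final induction, and your appeal to Boccara for $\ell=1$ matches the paper's Proposition \ref{prop:lis1}. The transitivity bookkeeping (orbits only coarsen under splitting a factor or enlarging a cycle's support) is also fine. The problem is your base case.

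The gap is in what you call the irreducible data: $g=1$, $\ell\ge 2$, and every pair of defects summing to at least $d$. Your bound $\ell\le 4$ is correct, but this does not reduce the problem to ``a short explicit list of shapes'' or ``a bounded, if somewhat tedious, check'': for each $\ell\in\{2,3,4\}$ these data form an \emph{infinite} family parametrized by $d$ and by arbitrary partitions $(a_i),(b_j)$ (e.g.\ $\{(5,1^5),(5,1^5),(7,1,1,1),(7,1,1,1)\}$ for $d=10$, and its relatives for every $d$), and you yourself identify transitivity here --- in particular when $p+q\ge d+2$, so that $\langle\tau_1,\tau_2\rangle$ cannot be transitive --- as ``the real difficulty,'' for which you offer only a plan, not a construction. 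This is precisely where a new idea is needed, and the paper supplies one that your scheme lacks: at $g=1$ (i.e.\ $m_1+\cdots+m_\ell=p+q$) it does \emph{not} try to merge two $\sigma$'s (which is what forces your restriction $m_i+m_j\le d-1$); instead, in Lemma \ref{lem:caselis2eq} and Step 1, it merges $m_\ell+1$ parts of the partition $(a_1,\ldots,a_p)$ into a single part $a_1+\cdots+a_{m_\ell+1}$ (possible since $m_\ell<p$ after normalization), solves the resulting datum with $\ell-1$ near-cycles, and then splits the long cycle of $\widetilde\tau_1$ back into the prescribed $a_i$-cycles times a new $(m_\ell+1)$-cycle $\sigma_\ell^{-1}$. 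This works uniformly with no size restriction on the defects, leaving only the single degenerate shape $p=q=m_1=m_2$ (Lemma \ref{lem:baselis2}) to be done by an explicit formula. Without an argument of this kind, or a genuinely complete construction for your leftover families, your proof is incomplete at its base case.
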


We establish the aforementioned proposition in the subsequent section. In the final section, we pose a question regarding the enumeration of branched covers in Theorem \ref{thm:rat} under weak and strong equivalence relations, respectively, which salutes to Adolf Hurwitz.

\section{Proof of Propostion \ref{prop:subgroup}}

\subsection{Case $\ell=1$: three partitions}
This particular case was, in fact, proven by Boccara \cite{Bo82}. In this subsection, we present an alternative proof, which appears to be more constructive than the original one proposed by Boccara.
\begin{prop}
\label{prop:lis1}
Proposition \ref{prop:subgroup} holds true for $\ell = 1$.
\end{prop}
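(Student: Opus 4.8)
The plan is to construct three explicit permutations $\tau_1, \tau_2, \sigma_1 \in S_d$ with $\tau_1$ of cycle type $(a_1, \ldots, a_p)$, $\tau_2$ of cycle type $(b_1, \ldots, b_q)$, and $\sigma_1$ a single $(m_1+1)$-cycle (with $m_1 = p+q-2+2g$), satisfying $\tau_1 \tau_2 \sigma_1 = \mathrm{id}$ and generating a transitive subgroup. Equivalently, setting $\sigma_1^{-1} = \tau_1 \tau_2$, we must realize a product of an $a$-type and a $b$-type permutation as a single long cycle of the prescribed length $m_1 + 1 = p + q - 2 + 2g + 1$, while keeping transitivity. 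The first step is to reduce to a normal form: I would place the cycles of $\tau_1$ on consecutive blocks of $\{1, \ldots, d\}$, say $\tau_1 = (1\,2\cdots a_1)(a_1{+}1\cdots a_1{+}a_2)\cdots$, and then try to choose $\tau_2$ so that the product threads all the $\tau_1$-cycles together into one cycle, using up the available ``slack'' $2g$ to pad the cycle length.

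The key combinatorial engine is the standard fact about multiplying by transpositions: composing a permutation with a transposition $(x\,y)$ either merges the two cycles containing $x$ and $y$ (if they differ) or splits one cycle into two (if $x,y$ lie in the same cycle). I would build $\tau_2$ as a product of $q$ cycles and analyze $\tau_1\tau_2$ cycle by cycle; more efficiently, I would first handle the ``tight'' case $g=0$ — where $m_1 + 1 = p + q - 1$ and the product of the two permutations must be a single cycle with the generic/maximal possible length, which is exactly the Boccara-type statement — and then handle $g > 0$ by a modification that repeatedly applies a ``split-then-remerge'' move: conjugating or locally editing $\tau_2$ by a transposition within one of its own cycles creates two extra fixed structure that, when multiplied back, lengthens the resulting long cycle by $2$ each time, consuming the budget $2g$ in $g$ steps without disturbing the cycle types of $\tau_1$ and $\tau_2$ or transitivity. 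Concretely, one can realize the $g$ extra pairs of branching by a local model on a handful of symbols where a $2$-cycle of $\tau_2$ interacts with a fixed point to add a detour to the long cycle.

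For transitivity, the point is essentially automatic once $\tau_1 \tau_2 = \sigma_1^{-1}$ is a single $d'$-cycle on its support: if that cycle already has full support $\{1,\ldots,d\}$ we are done, and if $m_1 + 1 < d$ the remaining symbols are fixed by $\sigma_1$ but must be moved by $\tau_1$ or $\tau_2$ (since they are genuine cycles in their types unless they contain fixed points), so I would arrange the block decomposition so that $\langle \tau_1, \tau_2\rangle$ visibly connects every symbol to the long cycle — typically by ensuring each $\tau_1$-cycle and each $\tau_2$-cycle shares at least one symbol with a symbol already known to be in the transitive orbit, which one checks directly in the explicit construction.

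The main obstacle I anticipate is the bookkeeping in the base case $g = 0$: proving that one can always interleave the $a$-cycles and $b$-cycles to produce a product that is a \emph{single} cycle of length exactly $p+q-1$ requires a careful inductive choice of where to ``attach'' the next cycle (an argument on the genus/Euler-characteristic count underlying Boccara's theorem, or equivalently the connectedness of an associated bipartite graph), and one must simultaneously avoid accidentally creating extra short cycles. Once that is in place, the passage to $g > 0$ via the length-increasing local move is comparatively routine, and the whole construction should be explicit enough to serve as the promised ``more constructive'' alternative to Boccara's original argument.
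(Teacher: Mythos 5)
There is a genuine gap, and it sits exactly where you located the ``main obstacle'': your proposed base case $g=0$ is not merely hard, it is false in general. For $\ell=1$ and $g=0$ the statement would require $\tau_1\tau_2$ to be a single $(p+q-1)$-cycle, and this is obstructed for data with all parts equal: e.g.\ for $\Lambda=\{(2,2),(2,2),(3,1)\}$ in $S_4$ the two factors lie in the Klein four-group, so their product can never be a $3$-cycle. This is precisely why Case 1 of Theorem \ref{thm:rat} carries the GCD constraint \eqref{equ:degwt} and why Proposition \ref{prop:subgroup} assumes $g\geq 1$. An induction that starts from the $g=0$ configuration and ``pads'' upward therefore has no base to start from for such branch data. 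Moreover, the padding step itself is unjustified: for $\ell=1$ there is only one $\sigma$, so lengthening the long cycle by $2$ forces you to modify $\tau_1$ or $\tau_2$ while preserving their cycle types on the \emph{same} ground set, and your ``split-then-remerge'' move (a $2$-cycle of $\tau_2$ interacting with a fixed point) presupposes fixed points that $\tau_1,\tau_2$ generically do not have. Showing that realizability of $(m_1+1,1,\dots,1)$ implies realizability of $(m_1+3,1,\dots,1)$ with the first two types fixed is essentially the whole theorem, not a routine afterthought.

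For comparison, the paper avoids both problems by inducting on the degree $d$ rather than on $g$: it shrinks the partitions (in the generic case by deleting a smallest part $b_q$ from the second partition and subtracting it from $a_p$, reducing $d$ to $d-b_q$ and $m_1$ to $m_1-1$; in the tight case by subtracting $2$ from both $a_p$ and $b_q$), then splices the removed symbols back in with explicit cycles, using the transitivity of the smaller configuration to find the attachment points. The all-parts-equal case, which kills your base case, gets its own reduction (peeling off an entire cycle of length $k$), with one sporadic datum $\{(3,3),(3,3),(5,1)\}$ handled by hand and the degenerate subcases delegated to Edmonds--Kulkarni--Stong. Your transitivity discussion is fine as far as it goes, but the construction it is meant to certify is not in place.
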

\begin{proof}
We use induction on $d$.

Firstly, observe that $d \geq m_{1} +1 = p + q - 2 + 2g +1 \geq 3$. Additionally, if $d = 3$, then $\Lambda = \{(3),\; (3), \; (3)\}$. In this case, we can choose $\tau_{1} = \tau_{2} = \sigma = (123)$.

Now, let's assume that the proposition holds true for partitions of $d \leq D - 1$, where $D\geq 4$. Recall that $\Lambda = \{ (a_{1}, \ldots, a_{p}),\; (b_{1}, \ldots, b_{q}),\; (m_{1}+1, 1, \ldots, 1) \}$ is a collection of three partitions of $D$ and $2g = m_{1} - p - q + 2\geq 2$.
\begin{itemize}
\item[Case 1] Suppose that there exist $i$ and $j$ such that $a_{i} \neq b_{j}$. Without loss of generality, we assume that $a_{p} > b_{q}$ and $b_{q} = \min \{a_{1}, \ldots, a_{p}, b_{1}, \ldots, b_{q} \}$.
\begin{itemize}
\item[Subcase 1.1] Let $m_{1} - 1 < D - b_{q}$. Then, applying the induction hypothesis to the collection
\[ \widetilde{\Lambda} = \{ (a_{1}, \ldots, a_{p}-b_{q}),\; (b_{1}, \ldots, b_{q-1}),\; (m_{1}, 1,\ldots, 1) \},\]
of three partitions of $D - b_{q}$, we could find three permutations $\widetilde{\tau}_{1},\; \widetilde{\tau}_{2}, \; \widetilde{\sigma}_{1} \in S_{D-b_{q}}$ such that $\widetilde{\tau}_{2} \widetilde{\sigma}_{1} = \widetilde{\tau}_{1} = \tilde{\mu}_{1} \cdots \tilde{\mu}_{p}$, where $\tilde{\mu}_{i}$ is a cycle of length $a_{i}$ for $1 \leq i \leq p-1$ and $\tilde{\mu}_{p}$ is a cycle of length $(a_{p}-b_{q})$. Since
$\langle \widetilde\tau_1,\widetilde\tau_2, \widetilde{\sigma}_{1} \rangle$ acts transitively on $\{1,2,\cdots, D-b_q\}$, by \cite[Lemma 2.10]{SX20}, $\widetilde{\sigma}_{1}$ and $\tilde{\mu}_{p}$ have at least one common element, say $a$. Then we could take
\begin{align*}
\tau_{2} &= (D-b_{q}+1, D-b_{q}+2, \ldots, D) \widetilde{\tau}_{2}, \\
\sigma_{1} &= \widetilde{\sigma}_{1}(a \, D), \\
\tau_{1} &= \tilde{\mu}_{1} \cdots \tilde{\mu}_{p-1}\cdot \Big( (D-b_{q}+1, D-b_{q}+2, \ldots, D) \tilde{\mu}_{p} (a\, D) \Big).
\end{align*}
\item[Subcase 1.2] Let $m_{1} - 1 \geq D - b_{q}$. Since $m_{1}+1 \leq D$, we have $b_{q} \geq 2$. Moreover, if $b_{q} = 2$, then $m_{1}+1 = D$. By Edmonds-Kulkarni-Stong \cite[Proposition 5.2]{EKS84}, it holds true for Case $b_{q} = 2$. Hence, we assume $b_{q} \geq 3$ and consider the following new collection
\[ \widetilde{\Lambda} = \{ (a_{1}, \ldots, a_{p-1}, a_{p}-2),\; (b_{1}, \ldots, b_{q-1}, b_{q}-2),\; (m_{1}-1,1, \ldots, 1) \}\]
of three partitions of $D-2$. Since $D - b_{q} \leq m_{1} - 1 = p + q - 3 + 2g$,
we have $2g \geq 3 + D - b_{q} - p - q$.\\

At first, it is easy to check that if $\max\{p, q\} \leq 2$, then $2g \geq 3$. On the other hand, $D - b_{q} - p - q \geq \frac{D}{3} - b_{q}$ since $p, q \leq \frac{D}{3}$. Therefore, if $\max\{p, q\} \geq 3$,  then we have
$$2g \geq 3 + D - b_{q} - p - q \geq 3 + \frac{\max\{p , q\} \cdot b_{q}}{3} - b_{q} \geq 3.$$
Summing up, we always have $g \geq 2$ and $m_{1} - 2 = p + q - 2 + 2(g-1)$.\\

By the induction hypothesis, there exist $\widetilde{\tau}_{1},\, \widetilde{\tau}_{2},\, \widetilde{\sigma}_{1}$ in $S_{D-2}$ corresponding to $\widetilde{\Lambda}$ with $\widetilde{\sigma}_{1} = \widetilde{\tau}_{1} \widetilde{\tau}_{2}$. Let $\widetilde{\tau}_{1} = \tilde{\mu}_{1} \cdots \tilde{\mu}_{p}$, where $\tilde{\mu}_{i}$ is a cycle of length $a_{i}$ for $1 \leq i \leq p-1$ and $\tilde{\mu}_{p}$ is a cycle of length $(a_{p}-2)$. Let $\widetilde{\tau}_{2} = \tilde{\nu}_{1} \cdots \tilde{\nu}_{q}$, where $\tilde{\nu}_{i}$ is a cycle of length $b_{i}$ for $1 \leq i \leq q-1$ and $\tilde{\nu}_{q}$ is a cycle of length $(b_{q}-2)$. By the transitive property, $\widetilde{\sigma}_{1}$ and $\tilde{\mu}_{p}$ have at least one common element, say $x$; $\widetilde{\sigma}_{1}$ and $\tilde{\nu}_{q}$ have at least one common element, say $y$. At last, we take
\begin{align*}
\tau_{1} &= (D,\, D-1,\, x) \widetilde{\tau}_{1} \\
\tau_{2} &= \widetilde{\tau}_{2} (D,\, D-1,\, y) \\
\sigma_{1} &= \tau_{1} \tau_{2},
\end{align*}
where $\sigma_{1} = (D,\, D-1,\, x) \widetilde{\sigma}_{1} (D,\, D-1,\, y)$ is always an $(m_{1} + 1)$-cycle in $S_{D}$ whether $x$ coincides with $y$ or not.
\end{itemize}
\item[Case 2] Let $a_{1} = \cdots = a_{p} = b_{1} = \cdots = b_{q} = k$. Then we have $k \geq 3$ since
$$p \cdot k = D \geq m_{1} + 1 = 2p - 1 + 2g.$$
Moreover, by Proposition 5.2 in \cite{EKS84}, we could assume $p = q \geq 2$ and then $m_{1} \geq 4$.
\begin{itemize}
\item[Subcase 2.1] Let $m_{1} - 1 \leq D - k$. Then, considering the following new collection
\[ \widetilde{\Lambda} = \{(a_{1}, \ldots, a_{p-1}),\; (b_{1}, \ldots, b_{q-1}),\; (m_{1}-1, 1, \ldots, 1)\} \]
of three partitions of $D-k$, we find by the induction hypothesis $\widetilde{\tau}_{1},\, \widetilde{\tau}_{2},\, \widetilde{\sigma}_{1} \in S_{D-k}$ corresponding to $\widetilde{\Lambda}$ such that $\widetilde{\sigma}_{1} = \widetilde{\tau}_{1} \widetilde{\tau}_{2}$. Let $\widetilde{\tau}_{1} = \tilde{\mu}_{1} \cdots \tilde{\mu}_{p}$, where $\tilde{\mu}_{i}$ is a cycle of length $k$ for $1 \leq i \leq p$. We claim that {\it each $\tilde{\mu}_{i}$ has at least $2$ common elements with $\widetilde{\sigma}_{1}$}. In fact, if $\tilde{\mu}_{i}$ has exactly one common elements with $\widetilde{\sigma}_{1}$, say $a$, then the cycle in $\widetilde{\tau}_{2} = \widetilde{\tau}^{-1}_{1} \widetilde{\sigma}_{1}$ containing $a$ has length greater than $k$. There is a contradiction here.

Let $x_{1}, x_{i}$ be two common numbers lying in both $\tilde{\mu}_{1} = (x_{1},x_{2}, \ldots, x_{i}, \ldots, x_{k})$ and $\widetilde{\sigma}_{1}$. Then we could take
\begin{align*}
\tau_{1} &= (x_{1}, D-k+1)(x_{i}, D-k+i)(D-k+1, D-k+2, \ldots, D) \widetilde{\tau}_{1}, \\
\tau_{2} &= \widetilde{\tau}_{2} (D, D-1, \ldots, D-k+1), \\
\sigma_{1} &= \tau_{1} \tau_{2},
\end{align*}
where $\sigma_{1} = (x_{1}, D-k+1) (x_{i}, D-k+i) \widetilde{\sigma}_{1}$ is a cycle of length $(m_{1} +1)$.
\item[Subcase 2.2] Let $m_{1} - 1 > D - k$. Then we consider the following collection
\[ \widetilde{\Lambda} = \{ (a_{1}, \ldots, a_{p-1}, a_{p}-2),\; (b_{1}, \ldots, b_{q-1}, b_{q}-2),\; (m_{1}-1,1, \ldots, 1) \} \]
of three partitions of $D-2$. Since $g \geq 2$, we could take the same construction as in Subcase 1.2 except for Case $\Lambda = \{(3,3), (3,3), (5,1)\}$ for which we could construct the three permutations by hand.
\end{itemize}
\end{itemize}
\end{proof}

\subsection{Case $\ell>1$: more than three partitions}
In this subsection, we will establish Case $\ell>1$ of Proposition \ref{prop:subgroup} through induction on both $\ell$ and $m_{1} + \cdots + m_{\ell} - p - q$. To begin, we require some preliminary lemmas.

\begin{lem}
\label{lem:3cycles}
Let $1 \leq s,\; r \leq d$ be two integers such that
$$s + r \geq d+1\quad {\rm and}\quad s + r \equiv d + 1 ({\rm mod}\ 2).$$
Then there exist an $s$-cycle $\sigma_{1}$ and an $r$-cycle $\sigma_{2}$ such that $\sigma_{1} \sigma_{2}$ is a $d$-cycle.
\begin{proof}
If $2k = s + r - (d+1)$, then $0 \leq 2k < r$. Let
\begin{align*}
\sigma_{2}^{-1} &= (r, r-1, \ldots, 2,1)(2k+1,2k, \ldots, 2,1) \\
&=(r, r-1, \ldots, 2k+2, 2k+1, 2k-1, \ldots, 1, 2k, 2k-2, \ldots, 2)
\end{align*}
Hence, $\sigma_{2}$ is an $r$-cycle. Note that
\[ (1,2, \ldots, d) \sigma_{2}^{-1} = (1, 2k+1,2k, \ldots, 2,r+1, r+2, \ldots, d) \]
Set $\sigma_{1} = (1, 2k+1,2k, \ldots, 2,r+1, r+2, \ldots, d)$. Then $\sigma_{1}$ is an $s$-cycle and $\sigma_{1} \sigma_{2} = (1,2, \ldots, d)$ is a $d$-cycle.
\end{proof}
\end{lem}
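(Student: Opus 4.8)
The plan is to give a completely explicit construction. The hypothesis $s+r \ge d+1$ with $s+r \equiv d+1 \pmod 2$ means that the "deficiency" $2k := s + r - (d+1)$ is a nonnegative even integer, and from $s \le d$ we get $2k = s + r - d - 1 \le r - 1 < r$. So I would start by fixing this $k$ and working inside $S_d$ on the symbols $\{1, 2, \ldots, d\}$.

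The key idea is to write the target $d$-cycle as $(1,2,\ldots,d)$ and then peel off an $r$-cycle from the left. First I would take $\sigma_2^{-1} = (r, r-1, \ldots, 2, 1)(2k+1, 2k, \ldots, 2, 1)$, a product of a decreasing $r$-cycle and a decreasing $(2k+1)$-cycle sharing the descending block $(2k+1, 2k, \ldots, 1)$; one checks by tracking where each symbol goes that this product is the single $r$-cycle $(r, r-1, \ldots, 2k+2, 2k+1, 2k-1, 2k-3, \ldots, 1, 2k, 2k-2, \ldots, 2)$ — the point being that composing with the short cycle merely reshuffles the tail $\{1, \ldots, 2k\}$ into the interleaved pattern while leaving $\{2k+1, \ldots, r\}$ in place, so the support still has size exactly $r$ and it is a single cycle. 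Hence $\sigma_2$ is an $r$-cycle. Then I would compute $(1,2,\ldots,d)\,\sigma_2^{-1}$ directly: multiplying the full $d$-cycle by this explicit $\sigma_2^{-1}$ collapses the segment $\{2, \ldots, r\}$ and produces $(1, 2k+1, 2k, \ldots, 2, r+1, r+2, \ldots, d)$, a cycle whose support has size $1 + 2k + (d - r) = d - r + 2k + 1 = s$ by the definition of $k$. Setting $\sigma_1$ equal to this cycle gives an $s$-cycle with $\sigma_1 \sigma_2 = (1, 2, \ldots, d)$, a $d$-cycle, as required.

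The only real content is verifying the two cycle-product identities — that $\sigma_2^{-1}$ really is a single $r$-cycle and that $(1,\ldots,d)\sigma_2^{-1}$ really is the claimed $s$-cycle. These are routine but slightly fiddly permutation computations, best done by chasing the image of a generic symbol in each of the blocks $\{1, \ldots, 2k\}$, $\{2k+1\}$, $\{2k+2, \ldots, r\}$, and $\{r+1, \ldots, d\}$ separately; the interleaving in the first block is where one must be careful about parity. I would present these as short verifications rather than a case analysis. There is no genuine obstacle here: the edge cases $k = 0$ (where $s + r = d+1$ and the construction degenerates to $\sigma_2 = (1,2,\ldots,r)$, $\sigma_1 = (1, r+1, \ldots, d)$) and $r = d$ are absorbed automatically by the formulas, so no separate treatment is needed.
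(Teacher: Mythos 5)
Your proposal is correct and follows essentially the same route as the paper's own proof: the same choice of $\sigma_2^{-1}=(r,r-1,\ldots,2,1)(2k+1,2k,\ldots,2,1)$ with $2k=s+r-(d+1)$, the same verification that it is a single $r$-cycle, and the same computation of $\sigma_1=(1,2,\ldots,d)\sigma_2^{-1}$ as an $s$-cycle. Your support-size count $1+2k+(d-r)=s$ is a nice explicit touch that the paper leaves implicit.
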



\begin{lem}
\label{lem:baselis2}
Proposition \ref{prop:subgroup} holds true if $\ell = 2$ and $p = q = m_{1} = m_{2}$.
\end{lem}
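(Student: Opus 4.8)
The plan is to pin down the genus first, then split into a large-degree regime that reduces to the already-proved $\ell=1$ case (Proposition~\ref{prop:lis1}) and a small-degree regime that needs explicit constructions. Substituting $p=q=m_{1}=m_{2}$ into $m_{1}+m_{2}=p+q-2+2g$ forces $g=1$. Put $n:=p=q=m_{1}=m_{2}$; since the last two profiles are $(n+1,1^{\,d-n-1})$ we have $d\ge n+1$. The task is to exhibit $\tau_{1}$ of cycle type $a_{1}^{1}\cdots a_{n}^{1}$, $\tau_{2}$ of type $b_{1}^{1}\cdots b_{n}^{1}$, and two $(n+1)$-cycles $\sigma_{1},\sigma_{2}$ in $S_{d}$ with $\tau_{1}\tau_{2}\sigma_{1}\sigma_{2}=\mathrm{id}$ and $\langle\tau_{1},\tau_{2},\sigma_{1},\sigma_{2}\rangle$ transitive.

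For $d\ge 2n+1$ I would apply Proposition~\ref{prop:lis1} to the triple of partitions of $d$ given by $\Lambda'=\{(a_{1},\dots,a_{n}),\,(b_{1},\dots,b_{n}),\,(2n+1,1^{\,d-2n-1})\}$, which meets its hypotheses with genus $1$ (indeed $2n=n+n-2+2$). This produces $\tau_{1},\tau_{2}$ of the prescribed types and a $(2n+1)$-cycle $\sigma$ with $\tau_{1}\tau_{2}\sigma=\mathrm{id}$ and $\langle\tau_{1},\tau_{2},\sigma\rangle$ transitive. By Lemma~\ref{lem:3cycles}, applied on the $(2n+1)$-element support of $\sigma$ with $s=r=n+1$, the cycle $\sigma$ factors as a product $\sigma_{1}\sigma_{2}$ of two $(n+1)$-cycles supported there; after conjugating the factorization so that the product is exactly $\sigma$ we get $\tau_{1}\tau_{2}\sigma_{1}\sigma_{2}=\tau_{1}\tau_{2}\sigma=\mathrm{id}$, and transitivity persists because $\sigma=\sigma_{1}\sigma_{2}\in\langle\sigma_{1},\sigma_{2}\rangle$.

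The work is in the range $n+1\le d\le 2n$. When $(a_{1},\dots,a_{n})$ and $(b_{1},\dots,b_{n})$ agree as partitions — automatic if $d=n+1$, where both equal $(2,1^{\,n-1})$ — one takes $\tau_{2}=\tau_{1}^{-1}$, $\sigma_{2}=\sigma_{1}^{-1}$ and chooses the $(n+1)$-cycle $\sigma_{1}$ with support meeting every one of the $n$ cycles of $\tau_{1}$ (possible since $d\ge n+1$ and some $a_{i}\ge 2$), whence $\langle\tau_{1},\sigma_{1}\rangle$ is transitive because traversing $\sigma_{1}$ visits all $n$ cycles of $\tau_{1}$. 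For $a\neq b$ I would induct on $n$: when $d\le 2n-1$ both partitions contain a part equal to $1$, so one can build $\tau_{1},\tau_{2}$ sharing a fixed point $z$, delete it, solve the resulting genus-$1$ instance for $n-1$ in $S_{d-1}$ (applied to $(a_{1},\dots,a_{n-1})$ and $(b_{1},\dots,b_{n-1})$), and then re-insert $z$ by threading it through both $(n+1)$-cycles — placing it just after a letter $u$ in $\sigma_{1}'$ and just after $(\sigma_{2}')^{-1}(u)$ in $\sigma_{2}'$, a choice for which one checks $\sigma_{1}\sigma_{2}=\sigma_{1}'\sigma_{2}'$, so that $\tau_{1}\tau_{2}\sigma_{1}\sigma_{2}=\tau_{1}'\tau_{2}'\sigma_{1}'\sigma_{2}'=\mathrm{id}$. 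The leftover boundary collections with $d=2n$ (chiefly those where precisely one of the two partitions equals $(2^{\,n})$) would be handled separately, in the spirit of the appeals to \cite[Proposition~5.2]{EKS84} and to the exceptional $\{(3,3),(3,3),(5,1)\}$ in the proof of Proposition~\ref{prop:lis1}.

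The hard part is exactly this small-degree, $a\neq b$ situation. Since $2(n+1)>d$ the supports of the two $(n+1)$-cycles must overlap in more than one letter, so Lemma~\ref{lem:3cycles} no longer yields a single long cycle and the clean reduction to Proposition~\ref{prop:lis1} is unavailable; and after threading $z$ back in, the generators $\sigma_{1},\sigma_{2}$ no longer restrict to permutations of $\{1,\dots,d-1\}$, so transitivity of $\langle\tau_{1},\tau_{2},\sigma_{1},\sigma_{2}\rangle$ does not follow formally from the smaller instance and must be re-derived. For this I expect to argue directly with \cite[Lemma~2.10]{SX20} on common elements of cycles in a transitive generating set, exactly as in the proof of Proposition~\ref{prop:lis1}.
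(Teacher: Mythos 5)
Your proposal does not follow the paper's route, and it is not complete: the small-degree regime $n+1\le d\le 2n$ with $a\neq b$ is exactly where the argument breaks down, and you only sketch it. Your deletion-and-reinsertion induction needs \emph{both} partitions to contain a part equal to $1$; by pigeonhole this is automatic when $d\le 2n-1$, but at $d=2n$ it fails precisely when one of the two partitions is $(2,2,\ldots,2)$ and the other is not (e.g.\ $d=4$, $n=2$, $\Lambda=\{(2,2),(3,1),(3,1),(3,1)\}$). You acknowledge these ``leftover boundary collections'' and defer them to an unspecified separate treatment; that deferral is a genuine gap, not a routine verification, since these are exactly the configurations for which neither the reduction to Proposition \ref{prop:lis1} via Lemma \ref{lem:3cycles} (which needs $d\ge 2n+1$) nor the fixed-point deletion is available. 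The rest of your plan is sound: the $g=1$ computation, the $d\ge 2n+1$ reduction (apply Proposition \ref{prop:lis1} to $\{(a),(b),(2n+1,1^{d-2n-1})\}$ and split the $(2n+1)$-cycle by Lemma \ref{lem:3cycles} with $s=r=n+1$), the $a=b$ case via $\tau_2=\tau_1^{-1}$, $\sigma_2=\sigma_1^{-1}$, and the threading identity $\sigma_1\sigma_2=\sigma_1'\sigma_2'$ all check out, and the transitivity transfer you worry about can indeed be salvaged because the new $\sigma_i$ are single cycles containing the supports of the old ones together with $z$.

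The paper avoids all of this case analysis with one uniform explicit construction (for $p=q=m_1=m_2>1$; the case $p=1$ is \cite[Proposition 5.2]{EKS84}). Ordering the parts decreasingly, it takes $\tau_1$ to be the product of consecutive cycles $(1,\ldots,a_1)(a_1+1,\ldots,a_1+a_2)\cdots$ and $\tau_2$ the analogous product for the $b_i$ in reverse; then
$\tau_1\tau_2=(a_1+\cdots+a_{p-1}+1,\ldots,a_1+1,1)\,(1,b_1+1,\ldots,b_1+\cdots+b_{q-1}+1)$,
a product of a $p$-cycle and a $q$-cycle sharing the letter $1$. Inserting $(12)(12)$ between them (legitimate because $a_1,b_1\ge2$, so $2$ lies in neither cycle) turns the two factors into a $(p+1)$-cycle $\sigma_2^{-1}$ and a $(q+1)$-cycle $\sigma_1^{-1}$, i.e.\ $(m_2+1)$- and $(m_1+1)$-cycles, and transitivity is immediate since $\sigma_2$ meets every block of $\tau_1$. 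If you want to rescue your approach you would need to supply ad hoc solutions for the $d=2n$ boundary families; but the paper's direct construction makes the entire case split unnecessary.
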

\begin{proof} Case $p=q=m_1=m_2=1$ follows from \cite[Proposition 5.2]{EKS84}. We assume $p=q=m_1=m_2>1$ in what follows.
We could derive $p, q < d$ from $(m_{1} +1) \leq d$. Without loss of generality, we could assume that
\[ a_{1} \geq a_{2} \geq \cdots \geq a_{p}, \quad b_{1} \geq b_{2} \geq \cdots \geq b_{q}. \]
Then $a_{1} \geq 2$ and $b_{1} \geq 2$. Let
\begin{align*}
\tau_{1} &= (a_{1}+a_{2}+\cdots+a_{p-1}+1, a_{1}+a_{2}+\cdots+a_{p-2}+1, \ldots, a_{1}+1, 1) \, (1, 2, \ldots, d) \\
&=(1,2,\ldots, a_{1}) \, (a_{1}+1, a_{1}+2, \ldots, a_{1}+a_{2}) \cdots (a_{1}+\cdots+a_{p-1}+1, \ldots, d) \\
\tau_{2} &= (d, d-1,\ldots, 1)(1, b_{1}+1, \ldots, b_{1}+b_{2}+\cdots+b_{q-1}+1) \\
&=(d, d-1, \ldots, b_{1}+b_{2}+\cdots+b_{q-1}+1)\cdots (b_{1}+b_{2}, \ldots, b_{1}+2, b_{1}+1) \, (b_{1}, \ldots, 2, 1)
\end{align*}
Then we have the following equality relevant to the four permutations of $\tau_1,\,\tau_2,\, \sigma_1$ and $\sigma_2$:
\begin{align*}
\tau_{1}\tau_{2} &=(a_{1}+a_{2}+\cdots+a_{p-1}+1,\ldots, a_{1}+1, 1) (1, b_{1}+1,\ldots, b_{1}+b_{2}+\cdots+b_{q-1}+1) \\
&=\underbrace{(a_{1}+a_{2}+\cdots+a_{p-1}+1,\ldots, a_{1}+1, 1)(12)}_{\sigma_{2}^{-1}} \underbrace{(12)(1, b_{1}+1,\ldots, b_{1}+b_{2}+\cdots+b_{q-1}+1)}_{\sigma_{1}^{-1}}
\end{align*}
Hence we complete the proof.
\end{proof}

\begin{lem}
\label{lem:caselis2eq}
Proposition \ref{prop:subgroup} holds true if $\ell = 2$ and $p+q = m_{1}+m_{2}$.
\end{lem}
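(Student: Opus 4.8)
The plan is to reduce the case $\ell=2$, $p+q=m_1+m_2$ (equivalently $g=1$, since $m_1+m_2=p+q-2+2g$) to the previously established Lemma~\ref{lem:baselis2}, where $p=q=m_1=m_2$. The key observation is that when $p+q=m_1+m_2$, we have a "dimension match" which should let us build the four permutations $\tau_1,\tau_2,\sigma_1,\sigma_2$ rather explicitly, mimicking the construction in the proof of Lemma~\ref{lem:baselis2}. The idea is: $\tau_1$ of type $a_1^1\cdots a_p^1$ should be written as a product of two cycles (one of length roughly $\sum a_i - (p-1)=d-p+1$, capturing the "sum" structure, and one short $(p-1)$-cycle), and similarly for $\tau_2$ and the $b_j$'s, so that the long pieces get absorbed into $\sigma_1,\sigma_2$ while the short pieces cancel telescopically.

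Concretely, first I would arrange $a_1\ge\cdots\ge a_p$ and $b_1\ge\cdots\ge b_q$, and handle the trivial/degenerate sub-cases (some $a_i$ or $b_j$ equal to $1$, or the small exceptional collections) separately, possibly invoking \cite[Proposition~5.2]{EKS84} as was done in earlier cases. Then, for the main sub-case, I would write
\begin{align*}
\tau_1 &= (a_1+\cdots+a_{p-1}+1,\,a_1+\cdots+a_{p-2}+1,\,\ldots,\,a_1+1,\,1)\,(1,2,\ldots,d),\\
\tau_2 &= (d,d-1,\ldots,1)\,(1,\,b_1+1,\,\ldots,\,b_1+\cdots+b_{q-1}+1),
\end{align*}
exactly as in Lemma~\ref{lem:baselis2}, so that $\tau_1$ has cycle type $a_1^1\cdots a_p^1$ and $\tau_2$ has cycle type $b_1^1\cdots b_q^1$. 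The product $\tau_1\tau_2$ then collapses to a product of two short cycles $C_a\,C_b$ where $C_a$ is a $p$-cycle and $C_b$ a $q$-cycle. Next I would need to factor $C_a C_b = \sigma_2^{-1}\sigma_1^{-1}$ with $\sigma_1$ an $(m_1+1)$-cycle and $\sigma_2$ an $(m_2+1)$-cycle; since $m_1+1+m_2+1 = p+q+2$ and the supports of $C_a$, $C_b$ overlap in exactly one point, Lemma~\ref{lem:3cycles} (applied in the symmetric group on the combined support, of size $p+q-1$) provides the required factorization as long as the parity and size conditions $s+r\ge d'+1$, $s+r\equiv d'+1\pmod 2$ hold with $d'=p+q-1$, $s=m_1+1$, $r=m_2+1$ — and indeed $s+r=p+q+2=d'+3$, which is $\equiv d'+1\pmod 2$ and $\ge d'+1$. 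Transitivity will follow because the cycle $(1,2,\ldots,d)$ already lies in $\langle\tau_1,\tau_2\rangle$ (it equals $\tau_2^{-1}$ times a permutation supported on $\{1,\ldots,p+q-1\}$... more carefully, one checks $\langle\tau_1,\tau_2\rangle$ contains a $d$-cycle), so the group acts transitively on $\{1,\ldots,d\}$.

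The main obstacle I anticipate is the bookkeeping needed to guarantee that $\sigma_1,\sigma_2$ obtained from Lemma~\ref{lem:3cycles} really do glue with the "long" cycles in the right way: Lemma~\ref{lem:3cycles} gives one specific factorization of a specific $d'$-cycle, but here $C_aC_b$ is not literally the standard $(1,2,\ldots,d')$-cycle, so I will need to conjugate appropriately and track supports so that $\sigma_1$ lands on the union of the first "long" cycle's support with part of the second, etc.; getting the exceptional small cases (analogous to $\{(3,3),(3,3),(5,1)\}$ in Subcase~2.2) right by hand is the other fiddly point. A cleaner alternative I would keep in reserve: observe that $p+q=m_1+m_2$ with $m_1+1,m_2+1\le d$ forces the data to be "close to" the $p=q=m_1=m_2$ case after a reduction step (e.g. if $a_p=b_q$, peel off a common block and induct; if not, a transposition-swap as in Subcase~1.1 of Proposition~\ref{prop:lis1} reduces $p+q-m_1-m_2$ or $d$), so that Lemma~\ref{lem:baselis2} becomes the genuine base case and this lemma follows by the same double induction that the subsection announces. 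I would write up whichever of the two routes produces fewer hand-checked exceptions.
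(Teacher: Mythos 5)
Your main route has a concrete failure that is more than bookkeeping. Take $d=4$, $(a_1,a_2)=(b_1,b_2)=(2,2)$, $m_1=3$, $m_2=1$ (so $p+q=4=m_1+m_2$ and $m_i+1\le d$). Your explicit choices give $\tau_1=(3\,1)(1\,2\,3\,4)=(1\,2)(3\,4)$ and $\tau_2=(4\,3\,2\,1)(1\,3)=(1\,2)(3\,4)$, hence $\tau_1\tau_2=\mathrm{id}$, which cannot be written as $\sigma_2^{-1}\sigma_1^{-1}$ with $\sigma_1$ a $4$-cycle and $\sigma_2$ a transposition (the product of a $4$-cycle and a $2$-cycle is never the identity). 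So the fixed choice of $\tau_1,\tau_2$ must depend on $(m_1,m_2)$, and no amount of conjugation of Lemma \ref{lem:3cycles} rescues it. The same example exposes the other two soft spots: the supports of your short cycles $C_a$ and $C_b$ need not meet in exactly one point (here they are both $\{1,3\}$), so $C_aC_b$ need not be a $(p+q-1)$-cycle; and when $m_2=1$ the required $(m_1+1)$-cycle has $p+q$ entries, exceeding the combined support of size $p+q-1$, so Lemma \ref{lem:3cycles} cannot apply inside that support. Your transitivity claim is also false as stated: in the example $\langle\tau_1,\tau_2\rangle=\{\mathrm{id},(1\,2)(3\,4)\}$ contains no $d$-cycle. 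Your reserve route is only a gesture --- ``peel off a common block'' changes $d$, $p+q$ and possibly violates $m_i+1\le d$ or the parity constraint, and no construction is supplied --- so neither route constitutes a proof.

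For comparison, the paper's argument avoids all of this by reducing to the already-proved three-partition case rather than to Lemma \ref{lem:baselis2}. After excluding $p=q=m_1=m_2$ via that lemma, one may assume $m_2<p$; merge $a_1,\dots,a_{m_2+1}$ into the single part $a_1+\dots+a_{m_2+1}$, obtaining a collection $\widetilde\Lambda$ of three partitions of the same $d$ with genus $1$, to which Proposition \ref{prop:lis1} applies and yields $\widetilde\tau_1,\tau_2,\sigma_1$ with $\widetilde\tau_1\tau_2=\sigma_1^{-1}$ and transitive action. Then the $(m_2+1)$-cycle $\sigma_2^{-1}=(a_1+\dots+a_{m_2}+1,\,\dots,\,a_1+1,\,1)$ splits the merged cycle of $\widetilde\tau_1$ back into cycles of lengths $a_1,\dots,a_{m_2+1}$, so $\tau_1=\sigma_2^{-1}\widetilde\tau_1$ has the right type and $\tau_1\tau_2\sigma_1\sigma_2=\mathrm{id}$; transitivity is inherited because $\widetilde\tau_1=\sigma_2\tau_1$ lies in the generated subgroup. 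If you want to salvage your write-up, this merge-then-split step is the missing idea.
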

\begin{proof}
Without loss of generality, we assume that $p \geq q$ and $m_{1} \geq m_{2}$. Moreover, by Lemma \ref{lem:baselis2}, we could assume $p \neq q$ or $m_{1} \neq m_{2}$, that is $m_{2} < p$. Consider the following new collection of partitions of $d$
\[ \widetilde{\Lambda} = \big\{ (a_{1}+a_{2}+\cdots+a_{m_{2}+1},a_{m_{2}+2}, \ldots, a_{p}),\; (b_{1}, b_{2}, \ldots, b_{q}), \; (m_{1}+1,1,\ldots, 1)\big\}. \]
Then by Proposition \ref{prop:lis1}, we know that there exist $\widetilde{\tau}_{1},\; \tau_{2}, \; \sigma_{1}$ corresponding to $\widetilde{\Lambda}$ such that $\widetilde{\tau}_{1} \tau_{2} = \sigma_{1}^{-1}$. In particular, $\widetilde{\tau}_{1}$ is a product of mutually disjoint $(p-m_{2})$ cycles of lengths $(a_{1}+a_{2}+\cdots+a_{m_{2}+1}),\; a_{m_{2}+2}, \ldots, a_{p}$, respectively. For simplicity, we assume that the cycle of length $(a_{1}+a_{2}+\cdots+a_{m_{2}+1})$ in the product is
\[ (1,2, \ldots, a_{1}+\cdots+a_{m_{2}+1}). \]
Note that
\begin{align*}
&(a_{1}+\cdots+a_{m_{2}}+1,\; a_{1}+\cdots+a_{m_{2}-1}+1,\; \ldots, a_{1}+1,\; 1) \cdot (1,2,\ldots, a_{1}+a_{2}+\cdots+a_{m_{2}+1}) \\
=&(1,2,\ldots, a_{1})(a_{1}+1,a_{1}+2,\ldots, a_{1}+a_{2}) \cdots (a_{1}+\cdots+a_{m_{2}}+1,\ldots, a_{1}+\cdots+a_{m_{2}+1})
\end{align*}
Set
\begin{align*}
\sigma_{2}^{-1} &= (a_{1}+\cdots+a_{m_{2}}+1,\; a_{1}+\cdots+a_{m_{2}-1}+1,\; \ldots, a_{1}+1,\; 1), \\
\tau_{1} &= \sigma_{2}^{-1} \widetilde{\tau}_{1}.
\end{align*}
Then we have
$\tau_{1} \tau_{2} \sigma_{1} \sigma_{2} = id$.
\end{proof}

\begin{lem}
\label{lem:lis2}
Proposition \ref{prop:subgroup} holds true for $\ell = 2$ and $p+q \leq m_{1}+m_{2}$.
\end{lem}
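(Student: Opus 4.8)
The plan is to induct on $m_1 + m_2 - p - q \geq 0$, treating the base case $m_1+m_2 = p+q$ as already settled by Lemma~\ref{lem:caselis2eq}. So assume $m_1 + m_2 > p + q$ and, without loss of generality, $m_1 \geq m_2$. Since $m_1 + 1 \leq d$ and $2m_1 \geq m_1 + m_2 > p + q$, we have a comfortable margin in the first cycle length. The idea is to peel off a small piece from the partition $(m_1+1, 1, \ldots, 1)$ and from one of the two partitions $(a_i)$, $(b_j)$, arriving at a collection with the same $\ell = 2$ but with $m_1 + m_2 - p - q$ strictly smaller, to which the induction hypothesis (or Lemma~\ref{lem:caselis2eq}) applies.

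Concretely, I would pick the smallest part among all the $a_i$ and $b_j$; by symmetry say it is $b_q$, and set $c = b_q$. Form the reduced collection
\[
\widetilde\Lambda = \big\{(a_1, \ldots, a_p),\ (b_1, \ldots, b_{q-1}),\ (m_1 + 1 - c, 1, \ldots, 1),\ (m_2+1, 1, \ldots, 1)\big\}
\]
of partitions of $d - c$. One must check this is legitimate: $m_1 + 1 - c \geq 1$ needs $c \leq m_1$, which holds since $c = b_q \leq d/q \leq d - 1$ and, when $q \geq 2$, $c \leq d/2 \leq m_1 + 1$ with a little care in the boundary case; when $q = 1$ we instead peel from the $(a_i)$ side (here $p \geq q = 1$ and $m_1 + m_2 > p + q$ forces $p \geq 2$ unless $m_1 + m_2 \geq 3$, a case one handles separately as in Subcase~1.2 of Proposition~\ref{prop:lis1}). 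Also the new total-branching identity $(m_1 - c) + m_2 = (p + (q-1)) - 2 + 2g'$ holds with $g' = g$, so $\widetilde\Lambda$ satisfies the hypothesis of Proposition~\ref{prop:subgroup} for $\ell = 2$ with $(m_1 - c) + m_2 - p - (q-1) = m_1 + m_2 - p - q + (1 - c) \leq m_1 + m_2 - p - q$; strict decrease when $c \geq 2$, and when $c = 1$ one is reducing $q$, which is a separate simple induction. By the induction hypothesis there exist $\widetilde\tau_1, \widetilde\tau_2, \widetilde\sigma_1, \widetilde\sigma_2 \in S_{d-c}$ with the required cycle types, product the identity, and transitive action.

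Now I would reinsert the $c$ new points $\{d-c+1, \ldots, d\}$. Write $\widetilde\sigma_1 = \widetilde\rho\,\widetilde\rho'\cdots$ where $\widetilde\rho$ is the $(m_1+1-c)$-cycle. Using transitivity and \cite[Lemma 2.10]{SX20}, $\widetilde\sigma_1$ and any given cycle of $\widetilde\tau_2$ share a point; more to the point, I need a point $a$ on the long cycle of $\widetilde\sigma_1$ and will conjugate/compose with the $c$-cycle $(d-c+1, \ldots, d, a)$-type correction so that the new $\sigma_1 := $ (extended $\widetilde\sigma_1$ absorbing the $c$ extra points into its long cycle) becomes an $(m_1+1)$-cycle, while $\tau_1 := \widetilde\tau_1$, and $\tau_2$ gets a new $c$-cycle $(d-c+1, \ldots, d)$ appended (matching the restored part $b_q = c$), and $\sigma_2 := \widetilde\sigma_2$ stays put — then recompute $\tau_1\tau_2\sigma_1\sigma_2$. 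This is exactly the mechanism of Subcase~1.1 of Proposition~\ref{prop:lis1}, now carried out with $\sigma_2$ as an inert spectator, and transitivity of the extended group is automatic since the new points are swept into an existing orbit. I expect the main obstacle to be bookkeeping the boundary cases where $c = 1$, or where $q = 1$ forces peeling from the $(a_i)$-partition, or where the reduced $m_1 + 1 - c$ drops to $1$ (so the first $\sigma$ becomes trivial and one really has an $\ell = 1$ problem) — each of these, as in Boccara's argument, is dispatched either by Lemma~\ref{lem:caselis2eq}, by Proposition~\ref{prop:lis1}, or by \cite[Proposition 5.2]{EKS84} for the very small degrees, but verifying that these exceptional ranges are actually the only ones requires the same kind of inequality juggling ($p, q \leq d/2$, $m_i + 1 \leq d$, $m_1 + m_2 \geq p + q$) that appears in Subcases~1.2 and~2.2 above.
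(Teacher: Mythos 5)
Your overall induction variable ($m_1+m_2-p-q$, with base case Lemma~\ref{lem:caselis2eq}) matches the paper's, but the inductive step you propose has a genuine gap, in fact two. First, the reduced collection $\widetilde\Lambda$ you write down is not a collection of partitions of $d-c$: you leave $(a_1,\ldots,a_p)$ untouched, so it still sums to $d$ while $(b_1,\ldots,b_{q-1})$ sums to $d-c$. If you repair this as in Subcase~1.1 of Proposition~\ref{prop:lis1} (replacing $a_p$ by $a_p-c$), you run into the second and more serious problem: with $p'=p$, $q'=q-1$, $m_1'=m_1-c$, $m_2'=m_2$, the identity $m_1'+m_2'=p'+q'-2+2g'$ forces $2g'=2g+1-c$, so your claim that $g'=g$ is false unless $c=1$, and for even $c$ the reduced collection violates the Riemann--Hurwitz parity altogether and is not covered by Proposition~\ref{prop:subgroup}. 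Relatedly, the reinsertion step cannot keep $\tau_1=\widetilde\tau_1$ and $\sigma_2=\widetilde\sigma_2$ fixed while restoring both the degree and the cycle type $(m_1+1)$; the bookkeeping you defer (``$c=1$'', ``$q=1$'', ``$m_1+1-c$ drops to $1$'') is not a list of boundary cases but the place where the construction breaks.

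The idea you are missing is that one need not change $d$, $p$, $q$, or the partitions $(a_i)$, $(b_j)$ at all. The paper reduces $m_1$ and $m_2$ each by $1$ simultaneously, which preserves parity and decreases $m_1+m_2-(p+q)$ by exactly $2$, then writes $\tau_1\tau_2=\widetilde\sigma_2\widetilde\sigma_1=\bigl(\widetilde\sigma_2(xy)\bigr)\bigl((xy)\widetilde\sigma_1\bigr)$, where the transposition $(xy)$ is inserted with $x$ and $y$ chosen according to how the supports $A_1$, $A_2$ of $\widetilde\sigma_1$, $\widetilde\sigma_2$ sit inside each other (e.g.\ $x\in A_1\setminus A_2$, $y\in A_2\setminus A_1$ when neither contains the other), so that each factor gains exactly one point and becomes a cycle of length $m_i+1$. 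This stays entirely inside $S_d$ and needs no degree surgery, no appeal to \cite[Lemma 2.10]{SX20}, and no case analysis on small parts. I would recommend reworking your step around this transposition-splicing device.
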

\begin{proof}
We use induction on $m_{1}+m_{2} - (p+q)$. By Lemma \ref{lem:caselis2eq}, we assume it holds true for $m_{1}+m_{2} = (p+q) + 2k$. Suppose $m_{1}+m_{2} - (p+q) = 2k + 2$. Then $(m_{1}-1) + (m_{2}-1) = (p+q) + 2k$. By the induction hypothesis, there exist $\tau_{1}, \tau_{2}$ and $m_{1}$-cycle $\widetilde{\sigma}_{1}$, $m_{2}$-cycle $\widetilde{\sigma}_{2}$ such that $\tau_{1} \tau_{2} = \widetilde{\sigma}_{2} \widetilde{\sigma}_{1}$. Let $A_{i}$ be the set of numbers in the cycle $\widetilde{\sigma}_{i} (i=1,2)$. We choose two numbers $x, y \in \{1,2,\ldots, d\}$ according to the following three cases, respectively:
\begin{itemize}
\item If $A_{1} \subseteq A_{2}$, then pick $x \in A_{1},\; y \notin A_{2}$;
\item If $A_{2} \subseteq A_{1}$, then pick $x \in A_{2},\; y \notin A_{1}$;
\item If $A_{1} \nsubseteq A_{2}$ and $A_{2} \nsubseteq A_{1}$, then pick $x \in A_{1} \setminus A_{2},\; y \in A_{2} \setminus A_{1}$.
\end{itemize}
Then $\sigma_{2} = \widetilde{\sigma}_{2}(x y)$ is a $(1+m_{2})$-cycle, $\sigma_{1} = (x y)\widetilde{\sigma}_{1}$ is a $(1+m_{1})$-cycle and
\[ \tau_{1} \tau_{2} = \sigma_{2} \sigma_{1}. \]
\end{proof}

Now we are in a position to complete the proof of  Proposition \ref{prop:subgroup}.
\begin{proof}[Proof of Proposition \ref{prop:subgroup}]
Here we use induction on $\ell$ and $m_{1}+\cdots+m_{\ell} - (p+q)$. Assume that Proposition \ref{prop:subgroup} holds true for $\ell \leq k-1\; (k \geq 3)$ and consider the case $\ell = k$. Then we use induction on $m_{1}+\cdots+m_{\ell} - (p+q)$.
\begin{itemize}
\item[(Step 1)] Let $m_{1} + \cdots + m_{\ell} = p+q$. Then  we could assume $m_{1} \geq m_{2} \geq \cdots \geq m_{\ell}$ and $p \geq q$ without loss of generality. Hence $p > m_{\ell}$ and the same argument as in Lemma \ref{lem:caselis2eq} works.
\item[(Step 2)] Let $m_{1}+\cdots+m_{\ell} = p+q + 2n-2$ where $n>1$. Then the same argument as in Lemma \ref{lem:lis2} shows that it holds true for $m_{1}+\cdots+m_{\ell} = p+q+2n$.
\end{itemize}
\end{proof}

\section{Hurwitz numbers}
Two branched covers $f_1 \colon X_1\to Y$ and $f_2 \colon X_2\to Y$ between compact Riemann surfaces are said to be {\it weakly equivalent} if there exist two biholomorphic maps $\widetilde g \colon X_1\to X_2$ and $g \colon Y \to Y$ such that $f_2\circ \widetilde g = g\circ f_1$. They are deemed {\it strongly equivalent} if the set of branched points in $Y$ remains fixed, and one can choose $g = id_{Y}$. The {\it weak} (resp. {\it strong}) {\it Hurwitz number} of a branch datum is the count of weak (resp. strong) equivalence classes of branched covers realizing it. The {\it simple Hurwitz numbers} specifically refer to those strong Hurwitz numbers associated with branch data in the form
\[(2,1,\cdots,1),\,\cdots,\, (2,1,\cdots, 1),\,(a_1,\cdots, a_p).\]

Long ago, Mednykh in \cite{Me84, Me90} provided general formulae for computing strong Hurwitz numbers, though their actual implementation is generally intricate. Lando-Zvonkin \cite[Chapter 5]{LZ04} systematically expounded on how to compute simple Hurwitz numbers for the target ${\Bbb P}^1$ in terms of intersection numbers on moduli spaces of curves. Furthermore, they presented an enumeration of polynomial rational functions ${\Bbb P}^1\to {\Bbb P}^1$ in terms of the Lyashko-Looijenga mapping. Monni-Song-Song \cite{MSS04} employed algebraic methods to compute simple Hurwitz numbers concerning branch data of the form $\{(2,1,\cdots,1),\,\cdots,\, (2,1,\cdots, 1)\}$ for arbitrary source and target Riemann surfaces, deriving the generating function for such simple Hurwitz numbers. Dubrovin-Yang-Zagier \cite{DYZ17} presented a polynomial-time algorithm for computing these simple Hurwitz numbers for the target surface ${\Bbb P}^1$. Quite recently, Petronio \cite{Pet19, Pet20} and Petronio-Sarti \cite{PetSa19} explicitly computed the weak Hurwitz number for specific branch data consisting of three partitions and having the target surface ${\Bbb P}^1$, utilizing a combinatorial method based on Grothendieck's dessin d'enfants. \\

\nd {\bf Question} Derive a concise formula for all weak (or strong) Hurwitz numbers corresponding to the branch data outlined in Theorem \ref{thm:rat}. \\



\nd {\bf Acknowledgments.}
J.S. expresses gratitude for the hospitality extended during his visit in Spring 2021 to the Institute of Geometry and Physics, USTC, where he, along with the other two authors, finalized this manuscript. B.X. extends deep appreciation to Qing Chen, Weibo Fu, Yi Ouyang, Mao Sheng and Michael Zieve for valuable conversations throughout the extended gestation period of this work. J.S. is partially supported by the National Natural Science Foundation of China (Grant Nos. 12001399 and 11831013) and the International Postdoctoral Exchange Fellowship Program by the Office of China Postdoctoral Council (NO. PC2021053). B.X. is supported in part by the National Natural Science Foundation of China (Grant Nos. 12271495, 11971450, and 12071449) and the CAS Project for Young Scientists in Basic Research (YSBR-001). Y.Y. is supported in part by the National Natural Science Foundation of China (Grant Nos. 11971449, 12131015, and 12161141001).

\end{document}